\DeclareMathAlphabet\mbc{OMS}{cmsy}{b}{n}
\setlist[enumerate]{label={(\roman*)},topsep=3pt}
\newcommand{\Id}{\operatorname{Id}}
\newcommand{\bs}{\boldsymbol}
\newcommand{\Hi}{\mathcal{X}}
\newcommand{\Rr}{\mathbb{R}}
\newcommand{\N}{\mathbb{N}}
\newcommand{\Ball}{\mathbb{B}}
\def\tto{\rightrightarrows}
\pretocmd{\theequation}{\small}{}{}
\def\th@plain{%
	\thm@notefont{}
	\itshape 
}
\def\th@definition{%
	\thm@notefont{}
	\normalfont 
}
\newtheorem{theorem}{Theorem}[section]
\newtheorem{corollary}[theorem]{Corollary}
\newtheorem{lemma}[theorem]{Lemma}
\newtheorem{fact}[theorem]{Fact}
\theoremstyle{definition}
\newtheorem{definition}[theorem]{Definition}
\newtheorem{remark}[theorem]{Remark}
\Crefname{fact}{Fact}{Facts}
\Crefname{assumption}{Assumption}{Assumptions}
\Crefname{enumi}{}{}
\numberwithin{equation}{section}
\title{Regularity of sets under a reformulation in a product space of reduced dimension}
\author{Rub\'en Campoy \thanks{Department of Mathematics, University of Alicante, Alicante, \textsc{Spain}. 
{E-mail:}~\href{mailto:ruben.campoy@ua.es}{ruben.campoy@ua.es}}}
\begin{document}
\graphicspath{{Python/}}
\maketitle

\begin{abstract}
Different notions on regularity of sets and of collection of sets play an important role in the analysis of the convergence of projection algorithms in nonconvex scenarios. While some projection algorithms can be applied to feasibility problems defined by finitely many sets, some other require the use of a product space reformulation to construct equivalent problems with two sets. In this work we analyze how some regularity properties are preserved under a reformulation in a product space of reduced dimension. This allows us to establish local linear convergence of parallel projection methods which are constructed through this reformulation.

\paragraph{\small Keywords} Regularity $\cdot$ Product space reformulation $\cdot$ Feasibility problem $\cdot$ Projection methods $\cdot$ Nonconvex $\cdot$ Super-regular set $\cdot$ Linear convergence
\paragraph{\small MSC\,2020:} 47H05 $\cdot$ 47J25 $\cdot$ 49M27 $\cdot$ 65K10 $\cdot$ 90C30
\end{abstract}

\section{Introduction}

The so-called \emph{feasibility problem} asks for a point in the intersection of a family of sets $C_1,\ldots,C_r$ in an Euclidean space; that is,
\begin{equation}\label{eq:FP}
\text{Find} \quad x^*\in C_1\cap C_2\cap \cdots\cap C_r.
\end{equation}
Projection algorithms are widely employed methods for solving \eqref{eq:FP} whenever the individual projectors onto the sets can be easily computed. The \emph{method of alternating projections (MAP)} \cite{APvN} and the \emph{Douglas--Rachford algorithm (DR)} \cite{DR56,LM79} are well known projection algorithms originally devised for solving feasibility problems with two sets. While the former can be naturally extended for an arbitrary number of sets \cite{B65}, it is not so obvious for the case of DR (see, e.g., \cite[Section~3.3]{aragon2020ORclassroom}). Although there exist some cyclic versions of DR for finitely many sets \cite{BNP15,MT14}, these are not \emph{frugal} in the sense that some of the projectors are employed more than once at each iteration. In contrast, one can always apply \emph{Pierra's product space reformulation}~\cite{Pierra} to derive a frugal parallel DR-type projection algorithm embedded in the product Euclidean space~$\Hi^r$. This enlargement of the dimension of the ambient space has been called as $r$-\emph{fold lifting}. In general, reduced lifting is preferred as this leads to computational memory savings.

In the more general context of monotone inclusions, which include feasibility problems as particular cases, the impossibility of a frugal three-operator DR algorithm without lifting was proved in \cite{ryu}. In addition, the author showed that the minimal lifting for three operators is $2$-fold. That result has been recently generalized in \cite{MT21} for an arbitrary number of operators. Further, frugal splitting algorithms with minimal $(r-1)$-lifting have been independently proposed in \cite{campoy21,MT21}. By now, the analysis of splitting algorithms with reduced lifting has become a very active research topic; see, e.g.~\cite{aragonbot21,aragonmalitsky21,BSW22,bricenoarias,DDHT21,Tam22}.

While the convergence of projection algorithms is well understood when the sets are convex, they are also popular in nonconvex settings. In this framework, local linear convergence of the schemes is usually analyzed by assuming some regularity properties of the individual sets and of their intersection; see, e.g.,
\cite{BNP15,BPW14,DDHT21,DP18,DP19,HL13,K06,KLT18,LLM09,Phan16}. In this work we analyze how some of these properties are preserved through the product space reformulation with reduced dimension studied by the author in \cite{campoy21}. This trick reformulates problem \eqref{eq:FP} as an equivalent feasibility problem defined by two sets in the product space $\Hi^{r-1}$ while keeping the computability of the projectors. Thus, it allows for devising new projection algorithms with $(r-1)$-lifting from already existing two-sets methods. Although the reformulation was shown to be valid for not necessarily convex sets but rather proximinal, there was a lack of theoretical results from the perspective of the local convergence of projection algorithms. The aim of this work is to extend the analysis of the reformulation by establishing that super-regularity of the sets (see~\Cref{def:superreg_sets}), as well as linear regularity and strong regularity of their intersection (see~\Cref{def:reg_collect_sets}), are inherit by the new product sets in the reformulated problem.  Hence, the local linear convergence of the derived algorithms can be deduced assuming that those conditions hold for the original problem.

The structure of this manuscript is as follows. We collect some preliminary notions and results in \Cref{sec:prelim}. In \Cref{sec:ProdSpace} we revisit the product space reformulation with reduced dimension and we include our main result regarding the preservation of regularity properties. We apply our results in \Cref{sec:applic} to derive the local linear convergence of a parallel projection algorithm with reduced lifting, where we also include a numerical experiment to illustrate the result. Finally, some conclusions are drawn in \Cref{sec:Conc}.

\section{Preliminaries}\label{sec:prelim}

Throughout this paper, $\Hi$ is a Euclidean space endowed with inner product $\langle \cdot,\cdot \rangle$ and induced norm $\|\cdot\|$. The set of nonnegative integers is denoted by $\N$ and $\Ball(x;\delta)$ stands for the closed ball centered at $x\in\Hi$ with radius $\delta\geq 0$. Given a linear subspace $M\subseteq\Hi$ we denote by $M^\perp$ to its orthogonal complement, i.e., $M^\perp=\{u\in\Hi : \langle x,u\rangle=0, \forall x\in M\}$ .

\subsection{Projection mapping}

Given a nonempty set $C\subseteq\Hi$, the \emph{distance function} to $C$,  $d_C:X\to[0,+\infty)$, is given at $x\in\Hi$ by
$$d_C(x):=\inf_{c\in C}\|c-x\|.$$

\begin{definition}
The \emph{projection mapping} (or \emph{projector}) onto $C$ is the possibly set-valued operator $P_C:\Hi\tto C$ defined at each $x\in\Hi$ by
\begin{equation*}
P_C(x):=\left\{p\in C : \|x-p\|=\inf_{c\in C}\|c-x\|\right\}.
\end{equation*}
Any point $p\in P_C(x)$ is said to be a \emph{best approximation} to $x$ from $C$ (or a \emph{projection} of $x$ onto $C$).
\end{definition}

If $C$ is assumed to be closed, then it is \emph{proximinal}; i.e., a projection onto $C$ exists for every point in the space (see, e.g., \cite[Corollary 3.15]{BC17}). When $C$ is in addition convex, then $C$ is \emph{Chebyshev}; i.e., the projector $P_C(x)$ is single-valued for all $x\in\Hi$ (see, e.g., \cite[Remark~3.17]{BC17}).

We recall next some properties of the projector.
\begin{fact}\label[fact]{fact:PC}
Let $C\subseteq\Hi$ be nonempty. The following hold.
\begin{enumerate}[label=(\roman*)]
\item If $C$ is closed and convex, then $P_C$ is continuous.\label{fact:PC_cont}
\item If $C$ is a linear subspace, then $P_C$ is a linear mapping.\label{fact:PC_linear}
\end{enumerate}
\begin{proof}
\ref{fact:PC_cont}: See, e.g., \cite[Proposition~3.12]{BC17}. \ref{fact:PC_linear}: See, e.g., \cite[Theorem~5.13]{D01}. 
\end{proof}
\end{fact}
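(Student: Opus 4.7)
The plan is to derive both statements from the standard variational characterization of the projection onto a closed convex set in a Euclidean space: $p = P_C(x)$ if and only if $p \in C$ and
\[\langle x - p, c - p \rangle \le 0 \quad \text{for all } c \in C.\]
This characterization, obtained by expanding $\|x - c\|^2$ and exploiting convexity along segments in $C$, simultaneously yields that $P_C(x)$ is a singleton, which justifies treating $P_C$ as a single-valued function in both parts.

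For (i), I would prove the stronger property that $P_C$ is nonexpansive. Given $x_1, x_2 \in \Hi$, set $p_i := P_C(x_i)$ and apply the variational inequality at $x_1$ with test point $p_2$ and at $x_2$ with test point $p_1$. Summing the two inequalities and rearranging produces
\[\|p_1 - p_2\|^2 \le \langle x_1 - x_2, p_1 - p_2 \rangle,\]
so Cauchy--Schwarz delivers $\|P_C(x_1) - P_C(x_2)\| \le \|x_1 - x_2\|$. Continuity (in fact, Lipschitz continuity with constant $1$) then follows at once.

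For (ii), I would first specialize the variational characterization to a subspace. For any $c \in C$ write $c = p + w$ with $w := c - p \in C$, so the inequality becomes $\langle x - p, w\rangle \le 0$ for all $w \in C$; since $-w \in C$ as well, this forces $\langle x - p, w \rangle = 0$ for every $w \in C$. Hence $p = P_C(x)$ if and only if $p \in C$ and $x - p \in C^\perp$. Now for $\alpha_1, \alpha_2 \in \Rr$ and $x_1, x_2 \in \Hi$, the candidate $q := \alpha_1 P_C(x_1) + \alpha_2 P_C(x_2)$ lies in $C$, and $(\alpha_1 x_1 + \alpha_2 x_2) - q$ is a linear combination of members of $C^\perp$, hence itself in $C^\perp$. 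By uniqueness of projection, $q = P_C(\alpha_1 x_1 + \alpha_2 x_2)$.

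No step is genuinely difficult; the main conceptual ingredient is the variational characterization together with the uniqueness of the best approximation onto a closed convex set, both standard consequences of the strict convexity of $\|\cdot\|^2$ and the closedness of $C$ in the finite-dimensional setting. The only minor subtlety worth highlighting in the write-up is the elementary reduction, in part (ii), of the obtuse-angle inequality to the orthogonality condition $x - p \in C^\perp$ when $C$ is a subspace.
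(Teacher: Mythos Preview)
Your argument is correct in both parts. The nonexpansiveness proof in (i) via the variational inequality is standard and clean, and in (ii) the reduction from the obtuse-angle inequality to the orthogonality condition $x-p\in C^\perp$ is exactly the right observation; the linearity then follows immediately from uniqueness of the projection. One tiny remark: in (ii) you implicitly use that a linear subspace of a Euclidean space is automatically closed (so that $P_C$ is well-defined and single-valued); this is harmless here but worth stating.

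As for comparison with the paper: the paper does not actually give a proof of this fact at all---it simply cites \cite[Proposition~3.12]{BC17} for (i) and \cite[Theorem~5.13]{D01} for (ii). So your proposal is not so much a different route as a self-contained substitute for what the paper outsources to standard references. What you gain is independence from those sources and, in (i), the stronger conclusion of $1$-Lipschitz continuity; what the paper gains is brevity, which is appropriate since Fact~2.1 is background material rather than a contribution.
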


In general, no closed expression exists for the projector onto the intersection of two sets, in terms of the individual projectors. However, if one of the involved sets is an affine subspace with some additional intersection structure with another closed set, we can establish the following relation on the projectors.
\begin{fact}\label[fact]{fact:Pinter}
Let $C\subseteq\Hi$ be nonempty and closed and let $D\subseteq\Hi$ be an affine subspace. If $P_C(d)\cap D\neq \emptyset$, for all $d\in D$, then 
$$P_{C\cap D}(x)=P_C(P_D(x))\cap D,\quad  \forall x\in \Hi.$$
\begin{proof}
See~\cite[Lemma~2.10]{campoy21}.
\end{proof}
\end{fact}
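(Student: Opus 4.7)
The plan is to reduce the computation of $P_{C\cap D}(x)$ for arbitrary $x \in \Hi$ to the computation of $P_{C\cap D}(y)$ for $y \in D$, and then to identify the latter with $P_C(y)\cap D$ using the hypothesis. The main tool is the Pythagorean decomposition afforded by the affine subspace~$D$.

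First I would observe that, since $D$ is an affine subspace, the residual $x-P_D(x)$ lies in the orthogonal complement of the direction subspace $D-D$. Consequently, for every $z\in D$ we have
\begin{equation*}
\|x-z\|^2=\|x-P_D(x)\|^2+\|P_D(x)-z\|^2.
\end{equation*}
Since $C\cap D\subseteq D$, minimizing the left-hand side over $z\in C\cap D$ is equivalent to minimizing $\|P_D(x)-z\|$ over the same set. This yields $P_{C\cap D}(x)=P_{C\cap D}(P_D(x))$, so the problem reduces to understanding $P_{C\cap D}(y)$ for points $y\in D$.

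Next I would exploit the hypothesis to show that, for every $y\in D$,
\begin{equation*}
P_{C\cap D}(y)=P_C(y)\cap D.
\end{equation*}
For the inclusion ``$\supseteq$'', any $p\in P_C(y)\cap D$ lies in $C\cap D$ and satisfies $\|y-p\|=d_C(y)\leq d_{C\cap D}(y)$, while trivially $d_{C\cap D}(y)\leq\|y-p\|$, so $p\in P_{C\cap D}(y)$. The hypothesis $P_C(y)\cap D\neq\emptyset$ guarantees the existence of at least one such $p$, and, more importantly, it pins down the equality $d_{C\cap D}(y)=d_C(y)$ for every $y\in D$. With this equality in hand, the reverse inclusion is immediate: any $q\in P_{C\cap D}(y)$ lies in $C\cap D$ and satisfies $\|y-q\|=d_{C\cap D}(y)=d_C(y)$, so $q\in P_C(y)\cap D$.

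Combining both steps gives $P_{C\cap D}(x)=P_{C\cap D}(P_D(x))=P_C(P_D(x))\cap D$. I do not anticipate a real obstacle; the only point deserving care is the role of the hypothesis, which is used precisely to force $d_{C\cap D}(y)=d_C(y)$ on $D$ and thereby make the two-set projection reduce to the one-set projection intersected with $D$. Everything else follows from the definition of projection and the Pythagorean identity on the affine subspace~$D$.
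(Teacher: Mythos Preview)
Your argument is correct. The paper does not actually prove this fact; it merely cites \cite[Lemma~2.10]{campoy21}, so there is no in-paper proof to compare against. Your approach---reducing to points of $D$ via the Pythagorean identity for the affine projector $P_D$, and then using the hypothesis to force $d_{C\cap D}(y)=d_C(y)$ on $D$---is the natural elementary argument and is exactly what one would expect the cited lemma to contain.
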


\subsection{Normal cone}

Regularity notions shall be defined in terms of the \emph{(limiting) normal cone} to the sets.%

\begin{definition}
The \emph{(limiting) normal cone} to $C\subseteq\Hi$ at a point $x\in C$ is given by
\begin{equation*}
N_C(x):=\left\{\lim_{k\to\infty}\tau_k(z_k-x_k) : \tau_k\geq 0, x_k\to x \text{ and } z_k\in P_C(x_k), \text{for all } k\in\N \right\},
\end{equation*}
whereas we just set $N_C(x)=\emptyset$ for $x\not\in C$.
\end{definition}

For a closed and convex set $C\subseteq\Hi$, the limiting normal cone coincides with the classical convex normal cone
$$\left\{ u\in\Hi : \langle c-x, u\rangle \leq 0, \forall c\in C\right\}.$$
In addition, when $C$ is a linear subspace, its normal cone turns to its orthogonal complement; i.e., $N_C(x)=C^\perp$ for all $x\in C$ (see, e.g., \cite[Example~6.43]{BC17}).

As in the case of projectors, there is no general expression relating the normal cone to the intersection of sets to those of the individual sets. The next lemma, which is a key tool in our analysis, establishes such a relation under the same assumptions than \Cref{fact:Pinter}. 

\begin{lemma}\label[lemma]{fact:Normal_interscomp}
Let $C\subseteq\Hi$ be a nonempty and closed set and let $D\subseteq\Hi$ be a linear  subspace. If $P_C(d)\cap D\neq \emptyset$, for all $d\in D$, then 
$$N_{C\cap D}(x)= (N_C(x)\cap D)+D^\perp,\quad  \forall x\in C\cap D.$$

\begin{proof}
Let $x\in C\cap D$ and let $u\in N_{C\cap D}(x)$. Then, there exist $\{\tau_k\}_{k=0}^\infty$, $\{x_k\}_{k=0}^\infty$ and $\{z_k\}_{k=0}^\infty$, with $\tau_k\geq 0$ and $z_k\in P_{C\cap D}(x_k)$, for all $k\in\N$, and $x_k\to x$ such that
\begin{equation}\label{e:u_inter}
u=\lim_{k\to\infty}\tau_k(z_k-x_k).
\end{equation}  
Set $d_k:=P_D(x_k)$, for each $k\in\N$. On the one hand, by continuity of the projector $P_D$ (see \cref{fact:PC}\ref{fact:PC_cont}) we get that $d_k\to P_D(x)=x$. On the other hand, since $z_k\in P_{C\cap D}(x_k)$, by applying \cref{fact:Pinter} we obtain that
\begin{equation}\label{e:u_zk}
z_k\in P_C(d_k)\cap D, \quad \forall k\in \N.
\end{equation}
Now, we can split \eqref{e:u_inter} as
\begin{equation}\label{e:u_decomp}
u=\lim_{k\to\infty}\tau_k(z_k-d_k)+\lim_{k\to\infty}\tau_k(d_k-x_k),
\end{equation}
provided that both limits exist. Indeed, since $P_D$ is a (continuous) linear mapping (see \cref{fact:PC}), from \eqref{e:u_inter} we derive that
\begin{align*}
P_D(u) & =\lim_{k\to\infty} P_D\left(\tau_k(z_k-x_k)\right)\\ 
&=\lim_{k\to\infty} \tau_k(P_D(z_k)-P_D(x_k))\\
&=\lim_{k\to\infty} \tau_k(z_k-d_k),
\end{align*}
where we have used the fact that $z_k\in D$ for all $k\in \N$, according to \eqref{e:u_zk}. This shows that the first limit in \eqref{e:u_decomp}, and therefore both of them, exist. Hence, we have obtained that $u=v+w$ with
\begin{subequations}\label{e:u_vw}
\begin{align}
v&:=P_D(u)=\lim_{k\to\infty}\tau_k(z_k-d_k)\in N_C(x),\\ w&:=\lim_{k\to\infty}\tau_k(d_k-x_k)\in N_D(x)=D^\perp.
\end{align}
\end{subequations}
Since $x\in C\cap D$ was arbitrary, we have proved the direct inclusion
\begin{equation}\label{feq:n2}
N_{C\cap D}(x)\subseteq (N_C(x)\cap D)+D^\perp,\quad  \forall x\in C\cap D.
\end{equation}

On the other hand, by taking $L=D$, $A=C\cap D$ and $a=x\in A$ in \cite[Theorem 3.5(25c)-(25d)]{BLPW13} we obtain that
\begin{equation*}\label{feq:n3}
N_{C\cap D}(x)= (N_{C\cap D}(x)\cap D)+D^\perp.
\end{equation*}
Now, as $C\cap D\subseteq C$, it follows that $N_{C}(x)\subseteq N_{C\cap D}(x)$, which combined with the previous expression yields the reverse inclusion of \eqref{feq:n2} and finishes the proof.
\end{proof}
\end{lemma}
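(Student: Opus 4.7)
The plan is to prove the two set-inclusions separately. The direct inclusion $\subseteq$ is the substantive part and will require the hypothesis $P_C(d)\cap D\neq\emptyset$; the reverse inclusion $\supseteq$ should follow from a known decomposition formula for the normal cone along a subspace, combined with the monotonicity of the limiting normal cone under set inclusion.

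For the forward inclusion, I would pick $u\in N_{C\cap D}(x)$ and unpack the definition to obtain sequences $\tau_k\geq 0$, $x_k\to x$, and $z_k\in P_{C\cap D}(x_k)$ with $u=\lim_k \tau_k(z_k-x_k)$. The crucial observation is that \Cref{fact:Pinter} is directly applicable and yields $z_k\in P_C(P_D(x_k))\cap D$. Writing $d_k:=P_D(x_k)$, this motivates the algebraic split $\tau_k(z_k-x_k)=\tau_k(z_k-d_k)+\tau_k(d_k-x_k)$. In the limit, I expect the first piece to produce an element of $N_C(x)\cap D$, since $d_k\to x$, $z_k\in P_C(d_k)$ and $z_k\in D$, while the second piece should produce an element of $N_D(x)=D^\perp$.

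The main technical obstacle is ensuring that the two limits exist \emph{separately}, not only their sum. I would overcome this by exploiting the linearity and continuity of $P_D$ (\Cref{fact:PC}): applying $P_D$ to the converging sequence $\tau_k(z_k-x_k)$ and using that $z_k\in D$ implies $P_D(z_k)=z_k$, I obtain $P_D(u)=\lim_k \tau_k(z_k-d_k)$, so the first limit exists and equals $P_D(u)$. Existence of the second limit then follows automatically, and it lies in $D^\perp$ since each $\tau_k(d_k-x_k)$ is a nonnegative multiple of a $D$-normal. Continuity of $P_D$ also delivers $d_k\to P_D(x)=x$, which is precisely what qualifies $P_D(u)$ as a member of $N_C(x)$ via the defining sequence $(d_k, z_k, \tau_k)$.

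For the reverse inclusion, I would invoke an external decomposition formula (such as \cite[Theorem~3.5]{BLPW13}) which ensures $N_{C\cap D}(x)=(N_{C\cap D}(x)\cap D)+D^\perp$ whenever $D$ is a subspace. Combining this with the elementary monotonicity $N_C(x)\subseteq N_{C\cap D}(x)$ at $x\in C\cap D$ (which is a standard property of the limiting normal cone, since $C\cap D\subseteq C$ reverses Fréchet-normal inclusions), I get $(N_C(x)\cap D)+D^\perp\subseteq(N_{C\cap D}(x)\cap D)+D^\perp=N_{C\cap D}(x)$, which completes the proof.
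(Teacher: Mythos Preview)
Your proposal is correct and follows essentially the same route as the paper's proof: the forward inclusion via the split $\tau_k(z_k-x_k)=\tau_k(z_k-d_k)+\tau_k(d_k-x_k)$ with $d_k=P_D(x_k)$, the use of \Cref{fact:Pinter} to obtain $z_k\in P_C(d_k)\cap D$, and the application of linearity/continuity of $P_D$ to establish separate existence of the two limits are exactly what the paper does; the reverse inclusion via \cite[Theorem~3.5]{BLPW13} together with the monotonicity $N_C(x)\subseteq N_{C\cap D}(x)$ also matches verbatim.
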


\subsection{Regularity of sets}
Let us finally recall the following notions of regularity of sets and of collection of sets.
\begin{definition}[Super-regular sets]\label[definition]{def:superreg_sets}
A subset $C\subseteq\Hi$ is said to be \emph{super-regular} at a point $\bar x\in C$ if, for any $\varepsilon>0$, there exists $\delta>0$ such that
\begin{equation*}
\langle u, y-z\rangle \leq \varepsilon \|u\|\|y-z\|,\quad \forall y,z\in C\cap \Ball(\bar x;\delta),\, \forall u\in N_C(z).
\end{equation*}
\end{definition}

\pagebreak

\begin{definition}[Regularity of collection of sets]\label[definition]{def:reg_collect_sets}
A finite family of sets $C_1,\ldots,C_r\subseteq\Hi$ is said to be
\begin{enumerate}[label=(\roman*)]
\item \emph{linearly regular} around $\bar x\in\Hi$ if there exist $\kappa\geq 0$ and $\delta>0$ such that
\begin{equation}\label{eq:def_linearlyreg}
d_{\cap_{i=1}^r C_i}(z)\leq \kappa \max\{d_{C_i}(z): i=1,\ldots,r\}, \quad \forall z\in\Ball(\bar x;\delta);
\end{equation}
\item \emph{strongly regular} at $\bar x\in\Hi$ if
\begin{equation}\label{eq:def_stronglyreg}
\sum_{i=1}^r u_i=0 \text{ with } u_i\in N_{C_i}(\bar x), \text{ for } i=1,\ldots,r \;\Longleftrightarrow\; u_1=\cdots=u_r=0.
\end{equation}
\end{enumerate}
\end{definition}

The above conditions are usually employed to derive the local linear convergence of some projection algorithms on nonconvex problems. Recall that a sequence $\{x_k\}_{k\in\N}$ \emph{converges $R$-linearly} to a point $x^*$ if there exist $\eta\in[0,1[$ and $M>0$ such that
\begin{equation*}
\|x_k- x^*\|\leq M\eta^k, \quad \forall k\in\N.
\end{equation*}
In the following fact, we recall the (local) linear convergence of the so-called {\em generalized Douglas--Rachford} algorithm for two sets under regularity conditions.

\begin{fact}[Linear convergence of gDR]\label[fact]{fact:gDR}
Let $A,B\subseteq\Hi$ be nonempty super-regular sets of $\Hi$ and let $w\in A\cap B\neq\emptyset$. Let $\lambda,\mu \in{]}0,2]$ and let $\alpha\in{]0,1[}$. Given $x_{0}\in\Hi$, set
\begin{equation}\label{it:factgDR}
x_{k+1}=(1-\alpha)x_{k}+\alpha\left((1-\mu)P_{B}+\mu\Id\right)\circ\left((1-\lambda)P_{A}+\lambda\Id\right)(x_k),\quad\forall k\in\N.
\end{equation}
Suppose that any of the following conditions holds:
\begin{enumerate}[label=(\roman*)]
\item $\{A,B\}$ is strongly regular at $w$,\label{fact:gDR_I}
\item $\min\{\lambda,\mu\}<2$ and $\{A,B\}$ is linearly regular around $w$.\label{fact:gDR_II}\\[-1ex]
\end{enumerate}
If the initial point $x_{0}$ is sufficiently close to $w$, then, the sequence generated by \eqref{it:factgDR} converges $R$-linearly to a point {$x^*\in A\cap B$}. When, in addition, $A$ and $B$ are convex sets, the $R$-linear convergence of the sequence is global.
\begin{proof}
See~\cite[Corollary 5.12]{DP18}.
\end{proof}
\end{fact}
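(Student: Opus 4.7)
The plan is to analyze the fixed-point operator
$$T := (1-\alpha)\Id + \alpha\, T_\mu \circ T_\lambda, \quad T_\lambda := (1-\lambda)P_A + \lambda\Id,\ T_\mu := (1-\mu)P_B + \mu\Id,$$
and to establish a local contraction estimate towards $\mathrm{Fix}\,T$ of the form $\|T(x)-x^\ast\|\leq\eta\|x-x^\ast\|$ with $\eta\in[0,1[$ on a neighborhood of $w$. Since $w\in A\cap B$ gives $P_A(w)=\{w\}=P_B(w)$, we have $w\in\mathrm{Fix}\,T$ as an anchor; inspection of the fixed-point equation $T_\mu(T_\lambda x^\ast)=x^\ast$ identifies the limit $x^\ast$ with a point of $A\cap B$ once convergence is established. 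It will be useful that $T_\lambda=\tfrac{\lambda}{2}R_A+\bigl(1-\tfrac{\lambda}{2}\bigr)P_A$ exhibits $T_\lambda$ as a convex combination of the reflection $R_A=2P_A-\Id$ and the projection $P_A$ for $\lambda\in[0,2]$.

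The first step is a local \emph{quasi-firmly-nonexpansive} inequality for each relaxed projector. Expanding $\|T_\lambda(x)-y\|^2$ for $y\in A$ near $w$ and using that $x-P_A(x)$ lies in the proximal (hence limiting) normal cone $N_A(P_A(x))$, \Cref{def:superreg_sets} bounds the cross term $\langle x-P_A(x),P_A(x)-y\rangle$ in absolute value by $\varepsilon\|x-P_A(x)\|\|P_A(x)-y\|$ on a ball around $w$ whose radius enforces the desired~$\varepsilon$. This produces an inequality of the form
$$\|T_\lambda(x)-y\|^2 \leq \|x-y\|^2 - \gamma_A(\lambda)\,d_A(x)^2 + O(\varepsilon)\,d_A(x)\|x-y\|,$$
with $\gamma_A(\lambda)>0$ strictly when $\lambda<2$. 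An identical estimate holds for $T_\mu$; chaining them and averaging with weight $\alpha$ yields
$$\|T(x)-y\|^2 \leq \|x-y\|^2 - C_1\,d_A(x)^2 - C_2\,d_B(T_\lambda x)^2 + O(\varepsilon)\bigl(d_A(x)+d_B(T_\lambda x)\bigr)\|x-y\|,$$
for $y\in A\cap B$ close to $w$ and constants $C_1,C_2\geq 0$ depending on $\alpha,\lambda,\mu$.

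The decisive second step is to exploit the regularity of $\{A,B\}$ to control the residual from below. Under case~(ii), linear regularity \eqref{eq:def_linearlyreg} converts $(d_A(x),d_B(x))$ into control of $d_{A\cap B}(x)$; together with the hypothesis $\min\{\lambda,\mu\}<2$, which keeps at least one of the $C_i$ strictly positive, a standard error-bound / local Fej\'er-monotonicity argument localizing fixed points of $T$ near $w$ then delivers the R-linear contraction towards $\mathrm{Fix}\,T$. Under case~(i), strong regularity \eqref{eq:def_stronglyreg} is upgraded to a quantitative transversality bound of the form $\|u_A+u_B\|\geq\tau(\|u_A\|+\|u_B\|)$ for $u_A\in N_A(w),u_B\in N_B(w)$, by a contradiction--compactness argument on the unit sphere of $N_A(w)\times N_B(w)$: failure would produce unit normals summing to zero, contradicting \eqref{eq:def_stronglyreg}. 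This transversality constant supplies a strict-decrease margin even when $\lambda=\mu=2$.

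Combining the two steps and absorbing the $O(\varepsilon)$ terms by further shrinking the neighborhood produces $\|T(x)-x^\ast\|^2\leq(1-\eta^2)\|x-x^\ast\|^2$, which iterates to the claimed R-linear rate; convexity of $A,B$ promotes $T_\lambda,T_\mu$ to globally averaged nonexpansive operators, so the estimates extend to all of $\Hi$ and the convergence becomes global. The principal obstacle is case~(i) with $\lambda=\mu=2$, in which both $T_\lambda$ and $T_\mu$ degenerate to pure reflections without any firm-nonexpansiveness buffer, so the entire contraction budget must be wrung out of the transversality estimate alone; bridging the qualitative implication \eqref{eq:def_stronglyreg} to a uniform quantitative bound valid near~$w$ is the technical heart of the Dao--Phan argument.
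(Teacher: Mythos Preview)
The paper does not actually prove this statement: it is recorded as a \emph{Fact}, and the entire proof reads ``See~\cite[Corollary~5.12]{DP18}.'' There is therefore no argument in the paper to compare your sketch against; the result is simply imported from Dao--Phan.

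Your proposal goes well beyond what the paper does by outlining the mechanism of the Dao--Phan proof: deriving quasi-firm-nonexpansiveness of the relaxed projectors from super-regularity, chaining and averaging to get a decrease estimate with residual $d_A(x)^2$ and $d_B(T_\lambda x)^2$, and then invoking linear regularity (case~(ii)) or a quantitative transversality constant extracted from strong regularity (case~(i)) to convert that residual into a strict contraction towards $A\cap B$. That is indeed the skeleton of the argument in \cite{DP18}, and your identification of the $\lambda=\mu=2$ case under~(i) as the delicate point is accurate. Two small caveats on the details: your convex-combination identity $T_\lambda=\tfrac{\lambda}{2}R_A+(1-\tfrac{\lambda}{2})P_A$ is not correct as written (check $\lambda=0$ and $\lambda=2$); and the passage from the decrease inequality to an $R$-linear rate requires more than ``iterating''---one needs a localization argument ensuring the iterates remain in the neighborhood where the estimates hold, which in \cite{DP18} is handled through a careful Fej\'er-type analysis. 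But since the paper itself only cites the result, these are refinements of a sketch that already exceeds the paper's own treatment.
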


\section{Regularity under a product space reformulation with reduced dimension}\label{sec:ProdSpace}

We begin this section by introducing the product space reformulation in a reduced dimensional product space proposed in \cite{campoy21}. To this aim, consider the product space $$\Hi^{r-1}=\Hi\times\stackrel{(r-1)}{\cdots}\times\Hi,$$ endowed with the inner product
\begin{equation*}
\langle \bs x, \bs y \rangle:=\sum_{i=1}^{r-1}\langle x_i,y_i\rangle, \quad \forall \bs{x}=(x_1,\ldots,x_{r-1}),\bs{y}=(y_1,\ldots,y_{r-1})\in\Hi^{r-1},
\end{equation*}
and define
$$\bs D_{r-1}:= \left\{ (x,\ldots,x)\in \Hi^{r-1} : x\in\Hi \right\},$$
which is a linear subspace of $\Hi^{r-1}$ commonly known as the \emph{diagonal}. We denote $\bs{j}_{r-1}:\Hi\to\bs{D}_{r-1}$ the canonical embedding that maps any $x\in\Hi$ to $\bs{j}_{r-1}(x)=(x,\ldots,x)\in\bs{D}_{r-1}$. Then, consider the product sets
\begin{subequations}\label{eq:npsf_sets}
\begin{align}
\bs B:= & C_1\times\cdots\times C_{r-1} \subseteq \Hi^{r-1},\label{eq:npsf_A}\\[0.2em]
\bs K:= &  C_r^{r-1}\cap \bs D_{r-1}= \{(x,\ldots,x)\in\Hi^{r-1}: x\in C_r\}\subseteq \Hi^{r-1}.\label{eq:npsf_K}
\end{align}
\end{subequations}
The equivalency, from the point of view of projection algorithms, between problem \eqref{eq:FP} and the one described by the sets in \eqref{eq:npsf_sets} is recalled in the following fact.

\begin{fact} Let $\allowbreak C_1,C_2,\ldots,$ $C_r\subseteq \Hi$ be closed sets and let $\bs B,\bs K\subseteq\Hi^{r-1}$ the product operators as defined in \eqref{eq:npsf_sets}. Then the following hold.\label[fact]{p:npsf}
\begin{enumerate}[label=(\roman*)]
\item $\bs B$ is closed and \label{p:npsf_B}
$$P_{\bs B}(\bs x)=P_{C_1}(x_1)\times \cdots\times P_{C_{r-1}}(x_{r-1}),\quad \forall\bs x=(x_1,\ldots,x_{r-1})\in \Hi^{r-1}.$$
If, in addition, $C_1,\ldots,C_{r-1}$ are convex then so is $\bs B$.
\item $\bs K$ is closed and \label{p:npsf_K}
$$P_{\bs K}(\bs x)=\bs j_{r-1}\left(P_{C_r}\left(\frac{1}{r-1} \sum_{i=1}^{r-1} x_i\right)\right),\quad \forall\bs x=(x_1,\ldots,x_{r-1})\in \Hi^{r-1}.$$
If, in addition, $C_{r}$ is convex then so is $\bs K$.
\item $\bs B\cap \bs K=\bs j_{r-1}\left(\cap_{i=1}^{r} C_i\right)$. \label{p:npsf_FP}
\item $P_{\bs B\cap \bs K}(\bs x)=\bs j_{r-1}\left(P_{\cap_{i=1}^r C_i}(x)\right), \quad\forall\bs{x}=\bs{j}_{r-1}(x)\in\bs D_{r-1}$. \label{p:npsf_BP}
\end{enumerate}
\begin{proof}
See \cite[Proposition~4.3]{campoy21}.
\end{proof}
\end{fact}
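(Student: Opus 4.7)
The plan is to verify the four items one by one, using only standard product-space identities for the norm together with elementary completing-the-square arguments; essentially nothing more than careful bookkeeping is needed.

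For part \ref{p:npsf_B}, closedness of $\bs B$ is the classical fact that a finite Cartesian product of closed sets is closed in the product topology. The projection formula follows from the separability of the norm in the product inner product: for any $\bs b=(b_1,\ldots,b_{r-1})\in\bs B$,
\begin{equation*}
\|\bs x-\bs b\|^2=\sum_{i=1}^{r-1}\|x_i-b_i\|^2,
\end{equation*}
so that $\inf_{\bs b\in\bs B}\|\bs x-\bs b\|^2=\sum_{i=1}^{r-1}\inf_{b_i\in C_i}\|x_i-b_i\|^2$ and a tuple $\bs b$ attains this infimum if and only if each $b_i$ attains $\inf_{c\in C_i}\|x_i-c\|$. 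The convexity statement is immediate since a product of convex sets is convex.

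For part \ref{p:npsf_K}, closedness of $\bs K$ follows because it is the intersection of the closed set $C_r^{r-1}$ (by part~\ref{p:npsf_B}) with the closed subspace $\bs D_{r-1}$, and convexity follows likewise. For the projector, I parametrize $\bs k\in\bs K$ as $\bs j_{r-1}(y)$ with $y\in C_r$. Completing the square gives
\begin{equation*}
\sum_{i=1}^{r-1}\|x_i-y\|^2 = (r-1)\left\|y-\frac{1}{r-1}\sum_{i=1}^{r-1}x_i\right\|^2 + \sum_{i=1}^{r-1}\|x_i\|^2 - \frac{1}{r-1}\left\|\sum_{i=1}^{r-1}x_i\right\|^2,
\end{equation*}
so minimizing over $y\in C_r$ is equivalent to projecting $\frac{1}{r-1}\sum_{i=1}^{r-1}x_i$ onto $C_r$, which gives the announced formula (with set equality, in view of the ``if and only if'' in the minimization).

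Part~\ref{p:npsf_FP} is a direct unpacking of definitions: a point $\bs x\in\bs B\cap\bs K$ is forced by membership in $\bs K$ to be of the form $(y,\ldots,y)$ with $y\in C_r$, and membership in $\bs B$ then requires $y\in C_i$ for $i=1,\ldots,r-1$; conversely any such $y\in\bigcap_{i=1}^r C_i$ produces an element of $\bs B\cap\bs K$ via $\bs j_{r-1}$. For part~\ref{p:npsf_BP}, I combine~\ref{p:npsf_FP} with the identity $\|\bs j_{r-1}(x)-\bs j_{r-1}(c)\|^2=(r-1)\|x-c\|^2$ to deduce that minimizing $\|\bs x-\bs b\|$ over $\bs b\in\bs B\cap\bs K$ amounts, up to the positive factor $(r-1)$, to minimizing $\|x-c\|$ over $c\in\bigcap_{i=1}^r C_i$, and the two minimizing sets therefore agree under $\bs j_{r-1}$. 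None of the steps presents a genuine obstacle; the only subtlety to keep in mind throughout is that the projectors are set-valued, which is why I want each minimization equivalence to go in both directions so that the projection formulas hold as equalities of sets rather than merely inclusions.
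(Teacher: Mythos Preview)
Your proof is correct; each of the four items is handled by a clean, elementary computation, and you are right to stress that the minimization equivalences go both ways so that the projector identities hold as equalities of (possibly multi-valued) sets. The paper itself does not give a proof but merely cites \cite[Proposition~4.3]{campoy21}, so your self-contained argument is more detailed than what appears here; it is in essence the standard derivation one would expect to find in that reference.
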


\begin{remark}
Recall that classical Pierra's reformulation~\cite{Pierra} reframes problem \eqref{eq:FP} as  
\begin{equation*}\label{eq:Pierra}
\text{Find } \bs x\in \bs C \cap \bs D_r\subseteq\Hi^{r}, \quad \text{with } \bs C:=C_1\times C_2\times\cdots\times C_r,
\end{equation*}
where the projectors are given by
\begin{equation*}\label{eq:pierra_proj}
P_{\bs C}(\bs x):= \prod_{i=1}^r P_{C_i}(x_i), \quad
P_{\bs D_{r}}:= \bs j_{r}\left(\frac{1}{r}\sum_{i=1}^r x_i\right);
\end{equation*}
for any $\bs x=(x_1,x_2,\ldots,x_r)\in\Hi^r$ (see, e.g., \cite[Proposition~4.2]{campoy21}). In contrast, we reformulate the same feasibility problem as
\begin{equation*}
\text{Find } \bs x\in \bs B\cap \bs K\subseteq\Hi^{r-1},
\end{equation*}
with $\bs B$ and $\bs K$ being the sets in \eqref{eq:npsf_sets}, whose projectors are given in \Cref{p:npsf}. Note that this leads to a reduction of one dimension of the ambient space in comparison to Pierra's trick. The computational advantage was shown in \cite{campoy21} through some numerical experiments.
\end{remark}

The analysis of the regularity properties of the sets in Pierra's reformulation is usually employed in order to derive local linear convergence of parallel projection algorithms. See, for instance, \cite[Theorem~7.3]{LLM09} for the \emph{method of averaged projections}. In particular, super-regularity of sets and linear and strong regularity of the intersection are kept after the reformulation (see, e.g., \cite[Propositions~3.1(i) and 3.2]{DDHT21}). We establish next analogous results for the reformulation in the product space with reduced dimension in \Cref{p:npsf}. Although our analysis employs similar techniques to those of \cite{DDHT21}, we need to stablish first the following technical result about the normal cones to the product sets in \eqref{eq:npsf_sets}.

\begin{lemma}\label[lemma]{lem:ncps}
Let $C_1,\ldots,$ $C_r\subseteq \Hi$ be nonempty and closed sets, let $\bs B,\bs K\subseteq \Hi^{r-1}$ be the product sets as defined in \eqref{eq:npsf_sets} and consider $\bs D_{r-1}$ the diagonal of the product space $\Hi^{r-1}$. Then, the following hold.
\begin{enumerate}[label=(\roman*)]
\item $N_{\bs B}(\bs x)=N_{C_1}(x_1)\times \cdots \times N_{C_{r-1}}(x_{r-1})$, $\forall \bs x=(x_1,\ldots,x_{r-1})\in\Hi^{r-1}$.\label{lem:ncps_I}
\item $N_{\bs D_{r-1}}(\bs x)=\bs D_{r-1}^\perp$, for all $\bs x=\bs j_{r-1}(x)\in\bs D_{r-1}$, and\label{lem:ncps_II}
$$\bs D_{r-1}^\perp=\left\{(u_1,\ldots,u_{r-1})\in\Hi^{r-1} : \sum_{i=1}^{r-1} u_i=0\right\}.$$
\item $N_{\bs K}(\bs x)=\bs j_{r-1}\left(N_{C_r}(x)\right)+\bs D_{r-1}^\perp$, $\forall \bs x=\bs j_{r-1}(x)\in\bs K$.\label{lem:ncps_III}
\end{enumerate}
\begin{proof}
The proofs of \ref{lem:ncps_I} and \ref{lem:ncps_II} follow from, e.g., \cite[Proposition 1.2]{MorI} and \cite[Proposition~26.4(i)-(ii)]{BC17}, respectively. To prove \ref{lem:ncps_III} let us define
\begin{equation}\label{eq:S}
\bs S:=C_r\times\stackrel{(r-1)}{\cdots}\times C_r\subseteq \Hi^{r-1},
\end{equation}
and fix any $\bs x=\bs j_{r-1}(x)\in \bs K=\bs S\cap \bs D_{r-1}$. By \cref{p:npsf}\ref{p:npsf_B}, for any arbitrary point $\bs q=\bs j_{r-1}(q)\in\bs D_{r-1}$, it holds that
$$\bs j_{r-1}(p)\in P_{\bs S}(\bs q)\cap \bs D_{r-1},\quad \forall  p\in P_{C_r}(q).$$
In particular, $P_{\bs S}(\bs q)\cap \bs D_{r-1}\neq\emptyset$ for all $\bs q \in \bs{D}_{r-1}$. Hence, we can apply \Cref{fact:Normal_interscomp} to express
$$N_{\bs K}(\bs x)=N_{\bs S\cap \bs D_{r-1}}(\bs x)=(N_{\bs S}(\bs x)\cap D_{r-1}) +D_{r-1}^\perp.$$
Since $N_{\bs S}(\bs x)=\prod_{i=1}^{r-1}N_{C_r}(x)$ by item \ref{lem:ncps_I}, the result follows.
\end{proof}
\end{lemma}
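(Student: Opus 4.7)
For parts \ref{lem:ncps_I} and \ref{lem:ncps_II}, the plan is to invoke standard facts. Part \ref{lem:ncps_I} is the coordinate-wise splitting of the limiting normal cone to a Cartesian product, which follows from the limiting definition together with the product form of the projector onto $\bs B$ already recorded in \Cref{p:npsf}\ref{p:npsf_B}: any approximating sequence $\bs z_k\in P_{\bs B}(\bs x_k)$ with $\bs x_k\to\bs x$ decomposes into componentwise sequences $z_k^{(i)}\in P_{C_i}(x_k^{(i)})$, so each scalar multiple $\tau_k(\bs z_k-\bs x_k)$ likewise factors. Part \ref{lem:ncps_II} first uses that the (limiting) normal cone to a linear subspace reduces to the convex normal cone, which coincides with the orthogonal complement; the explicit description of $\bs D_{r-1}^\perp$ is then a one-line computation using $\langle \bs j_{r-1}(x),\bs u\rangle=\langle x,\sum_{i=1}^{r-1} u_i\rangle$ for arbitrary $x\in\Hi$.

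The substantive part is \ref{lem:ncps_III}, and my plan is to realize $\bs K$ as the intersection $\bs S\cap \bs D_{r-1}$ with $\bs S:=C_r\times\stackrel{(r-1)}{\cdots}\times C_r$ and then apply \Cref{fact:Normal_interscomp} (with $C\leftarrow \bs S$ and $D\leftarrow \bs D_{r-1}$). To do so, I first need to check the hypothesis that $P_{\bs S}(\bs d)\cap \bs D_{r-1}\neq\emptyset$ for every $\bs d\in \bs D_{r-1}$. For $\bs d=\bs j_{r-1}(d)$, part \ref{lem:ncps_I} (equivalently \Cref{p:npsf}\ref{p:npsf_B}) gives the product decomposition $P_{\bs S}(\bs d)=P_{C_r}(d)\times\cdots\times P_{C_r}(d)$, which is nonempty by closedness of $C_r$; choosing a common $p\in P_{C_r}(d)$ in each coordinate produces $\bs j_{r-1}(p)\in P_{\bs S}(\bs d)\cap \bs D_{r-1}$, so the hypothesis holds.

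Once \Cref{fact:Normal_interscomp} is applicable, it yields
$$N_{\bs K}(\bs x)=\bigl(N_{\bs S}(\bs x)\cap \bs D_{r-1}\bigr)+\bs D_{r-1}^\perp.$$
By part \ref{lem:ncps_I} applied to $\bs S$, we have $N_{\bs S}(\bs x)=N_{C_r}(x)\times\cdots\times N_{C_r}(x)$, and intersecting with the diagonal picks out exactly the tuples $(u,\ldots,u)$ with $u\in N_{C_r}(x)$, i.e.\ $\bs j_{r-1}(N_{C_r}(x))$. Substituting delivers the announced formula.

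The work is essentially bookkeeping: the genuine content was already distilled into \Cref{fact:Normal_interscomp}, so the only step that needs care is the verification of its proximality hypothesis for $\bs S$ and $\bs D_{r-1}$ (trivially true here thanks to the symmetric product structure of $\bs S$). There is no analytical obstacle beyond that; the remaining algebra is a straightforward unfolding of the product and diagonal definitions.
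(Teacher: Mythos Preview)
Your proposal is correct and follows essentially the same route as the paper: parts \ref{lem:ncps_I} and \ref{lem:ncps_II} are dispatched by standard references, and for \ref{lem:ncps_III} you write $\bs K=\bs S\cap\bs D_{r-1}$ with $\bs S:=C_r^{r-1}$, verify the proximality hypothesis of \Cref{fact:Normal_interscomp} via the product form of $P_{\bs S}$, and then read off the claimed formula from $N_{\bs S}(\bs x)\cap\bs D_{r-1}=\bs j_{r-1}(N_{C_r}(x))$. The only cosmetic difference is that the paper cites \cite[Proposition~1.2]{MorI} and \cite[Proposition~26.4]{BC17} directly rather than sketching the limiting-sequence argument for \ref{lem:ncps_I}.
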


We are now ready to derive our main result regarding the regularity of the product sets in \eqref{eq:npsf_sets} and of their intersection, provided that the original sets verify those conditions.

\begin{theorem}\label{th:reg}
Let $C_1,\ldots,$ $C_r\subseteq \Hi$ be nonempty and closed sets and let $\bs B,\bs K\subseteq \Hi^{r-1}$ be the product sets as defined in \eqref{eq:npsf_sets}. Then, the following statements hold.
\begin{enumerate}[label=(\roman*)]
\item If $C_i$ is super-regular at $\bar x_i\in C_i$, for all $i=1,\ldots,r-1$, then the product set $\bs B$ is super-regular at $\bs {\bar x}:=(\bar x_1,\ldots,\bar x_{r-1})\in \bs B$.\label{th:regI}
\item If $C_r$ is super-regular at $\bar x\in C_r$, then $\bs K$ is super-regular at $\bs {\bar x}:=\bs j_{r-1}(\bar x)\in \bs K$.\label{th:regII}
\item If $\{C_1,\ldots,C_r\}$ is linearly regular around $\bar  x\in\Hi$, then $\{\bs B,\bs K\}$ is linearly regular around $\bs {\bar x}:=\bs j_{r-1}(\bar x)\in\Hi^{r-1}$.\label{th:regIII}
\item If $\{C_1,\ldots,C_r\}$ is strongly regular at $\bar x\in\Hi$, then $\{\bs B,\bs K\}$ is strongly regular at $\bs {\bar x}:=\bs j_{r-1}(\bar x)\in\Hi^{r-1}$.\label{th:regIV}
\end{enumerate}

\begin{proof}
\ref{th:regI}: See, e.g., \cite[Proposition 3.1(a)]{DDHT21}.

\ref{th:regII}: Suppose that $C_r$ is super-regular at $\bar x\in C_r$, set $\bs {\bar x}:=\bs j_{r-1}(x)\in \bs K$ and pick any arbitrary $\varepsilon >0$. Consider the set $\bs S$ as in \eqref{eq:S} so that $\bs K=\bs S\cap \bs D_{r-1}$. From item \ref{th:regI} we get that $\bs S$ is super-regular at $\bs {\bar x}$ and, thus, there exists $\delta>0$ such that
\begin{equation}\label{e:srS}
\langle \bs v, \bs y-\bs z\rangle \leq \varepsilon \|\bs v\|\|\bs y-\bs z\|,\quad \forall \bs y,\bs z\in \bs S\cap \Ball(\bs {\bar x};\delta),\, \forall \bs v\in N_{\bs S}(\bs z).
\end{equation}
Let $\bs y,\bs z\in \bs K\cap \Ball(\bs {\bar x};\delta)$ and let $\bs u\in N_{\bs K}(\bs z)$. By \cref{lem:ncps}\ref{lem:ncps_III} we can express
\begin{equation*}
\bs u=\bs v + \bs w, \quad \text{with } \bs v\in N_{\bs S}(\bs z)\cap \bs D_{r-1} \text{ and } \bs w\in {\bs D_{r-1}}^\perp.
\end{equation*}
In view of \eqref{e:srS} we get that
\begin{equation}\label{e:srS_bis}
\langle \bs v, \bs y-\bs z\rangle \leq \varepsilon \|\bs v\|\|\bs y-\bs z\|.
\end{equation}
Further, it holds that
\begin{equation}\label{e:srD}
\langle \bs w, \bs y-\bs z\rangle = 0,
\end{equation}
as $\bs w\in {\bs D_{r-1}}^\perp$ and $\bs y,\bs z\in\bs D_{r-1}$.
In addition, $\|\bs u\|^2=\|\bs v\|^2+\|\bs w\|^2$ since $\bs v\in \bs D_{r-1}$ and $\bs w\in \bs D^\perp_{r-1}$. In particular, this implies that $\|\bs v\|\leq \|\bs u\|$, which combined with   \eqref{e:srS_bis} and \eqref{e:srD} yields
\begin{align*}
\langle \bs u, \bs y-\bs z\rangle&=\langle \bs v, \bs y-\bs z\rangle + \langle \bs w, \bs y-\bs z\rangle\\
& \leq  \varepsilon \|\bs v\|\|\bs y-\bs z\|\\
& \leq  \varepsilon \|\bs u\|\|\bs y-\bs z\|
\end{align*}
and proves that $\bs K$ is super-regular at $\bs {\bar x}$.

\ref{th:regIII}: First of all we note that, for any $\bs q=\bs j_{r-1}(q)\in\bs D_{r-1}$, thanks to \cref{p:npsf} one can easily check that
\begin{subequations}\label{e:dprodsets}
\begin{align}
d^2_{\bs B\cap \bs K}(\bs q)&=(r-1)d^2_{\cap_{i=1}^rC_i}(q),\label{e:dprodsets_BK}\\
d^2_{\bs B}(\bs q)&=\sum_{i=1}^{r-1} d^2_{C_i}(q),\label{e:dprodsets_B}\\
d^2_{\bs K}(\bs q)&=(r-1)d^2_{C_r}(q).\label{e:dprodsets_K}
\end{align}
\end{subequations}
In particular, \eqref{e:dprodsets_B} implies that $d_{C_i}(q)\leq d_{\bs B}({\bs q})$, for all $i=1,\ldots,r-1$, which combined with \eqref{e:dprodsets_K} yields to
\begin{equation}\label{e:maxleqmax}
\begin{aligned}
\max\{d_{C_i}(q): i=1,\ldots,r\}&\leq\max\left\{d_{\bs B}(\bs q),\frac{1}{\sqrt{r-1}}d_{\bs K}(\bs q)\right\}\\&\leq \max\left\{d_{\bs B}(\bs q),d_{\bs K}(\bs q)\right\}.
\end{aligned}
\end{equation}
Now, suppose that $\{C_1,\ldots,C_r\}$ is linearly regular around $\bar x\in\Hi$; i.e., there exits $\kappa>0$ and $\delta>0$ such that \eqref{eq:def_linearlyreg} holds, and set $\bs {\bar x}:=\bs j_{r-1}(\bar x)\in\Hi^{r-1}$. Pick any $\bs z\in \Ball(\bs {\bar x}; \sqrt{r-1}\tfrac{\delta}{2})$ and let $\bs q=P_{\bs D_{r-1}}(\bs z)$. We trivially get that
\begin{equation}\label{e:z-q}
\|\bs q-\bs z\| = d_{\bs D_{r-1}}(\bs z)\leq d_{\bs K}(\bs z).
\end{equation}
Moreover, since $\bs {\bar x}\in \bs D_{r-1}$ we have that
\begin{equation*}
\|\bs q-\bs {\bar x}\|\leq \|\bs q-\bs z\|+\|\bs {\bar x}-\bs z\|\leq 2\|\bs {\bar x}-\bs z\|\leq \sqrt{r-1}\delta,
\end{equation*}
which implies that $\bs q=\bs j_{r-1}(q)$ with $q\in\Ball(x;\delta)$. Therefore, the linear regularity of $\{C_1,\ldots,C_r\}$ around $\bar x$ implies that 
\begin{equation}\label{e:dinter}
d_{\cap_{i=1}^r C_i}(q)\leq \kappa \max\{d_{C_i}(q): i=1,\ldots,r\}.
\end{equation}
Hence, using \eqref{e:dprodsets_BK},  \eqref{e:maxleqmax}, \eqref{e:z-q} and \eqref{e:dinter} we deduce that  
\begin{align*}
d_{\bs B\cap \bs K}(\bs z) & \leq d_{\bs B\cap \bs K}(\bs q) + \|\bs q-\bs z\|\\
& = \sqrt{r-1}d_{\cap_{i=1}^rC_i}(q) + d_{\bs K}(\bs z)\\
& \leq  \kappa\sqrt{r-1}\max\{d_{C_i}(q): i=1,\ldots,r\} + d_{\bs K}(\bs z)\\
& \leq \kappa\sqrt{r-1}\max\left\{d_{\bs B}(\bs q),d_{\bs K}(\bs q)\right\} + d_{\bs K}(\bs z)\\
& \leq \kappa\sqrt{r-1}\left(\max\left\{d_{\bs B}(\bs z),d_{\bs K}(\bs z)\right\}+\|\bs q-\bs z\|\right) + d_{\bs K}(\bs z)\\
& \leq \kappa\sqrt{r-1}\max\left\{d_{\bs B}(\bs z),d_{\bs K}(\bs z)\right\} + \left(1+\kappa\sqrt{r-1}\right)d_{\bs K}(\bs z)\\
& \leq \left(1+2\kappa\sqrt{r-1}\right)\max\left\{d_{\bs B}(\bs z),d_{\bs K}(\bs z)\right\},
\end{align*}
which shows that $\{\bs B,\bs K\}$ is linearly regular around $\bs {\bar x}$.

\ref{th:regIV}: Suppose that $\{C_1,\ldots,C_r\}$ is strongly regular at $\bar x\in\Hi$ and set $\bs {\bar x}=\bs j_{r-1}(\bar x)\in \bs D_{r-1}$. Let $\bs v\in N_{\bs B}(\bs {\bar x})$ and $\bs w\in N_{\bs K}(\bs {\bar x})$ such that
\begin{equation}\label{e:v+w}
\bs v+\bs w= 0.
\end{equation}
By \cref{lem:ncps}\ref{lem:ncps_III} we can write $\bs w=\bs z + \bs u$ where $\bs z\in \bs j_{r-1}\left(N_{C_r}(\bar x)\right)$ and $\bs u\in {\bs D_{r-1}}^\perp$. Now, in view of \cref{lem:ncps}\ref{lem:ncps_I}--\ref{lem:ncps_II} we obtain that
\begin{subequations}\label{e:vzu_dec}
\begin{align}
\bs v=(v_1,\ldots,v_{r-1}),&\quad \text{with } v_i\in N_{C_i}(\bar x), \forall i=1,\ldots,r-1,\\
\bs z=\bs j_{r-1}(z),&\quad \text{with } z\in N_{C_r}(\bar x),\\
\bs u=(u_1,\ldots,u_{r-1}),&\quad \text{with } \sum_{i=1}^{r-1}u_i=0.
\end{align}
\end{subequations}
Then, by combining \eqref{e:v+w} and \eqref{e:vzu_dec} we get that $v_i+z+u_i=0$ for all $i\in\{1,\ldots,r-1\}$. By summing up all these equations we arrive at
\begin{equation*}\label{e:v+w_reg}
0=\sum_{i=1}^{r-1}v_i + (r-1)z + \sum_{i=1}^{r-1}u_i=\sum_{i=1}^{r-1}v_i + (r-1)z.
\end{equation*}
Since $v_i\in N_{C_i}(\bar x)$ for $i=1,\ldots,r-1$, and $(r-1)z\in N_{C_r}(\bar x)$, the strong regularity of $\{C_1,\ldots,C_r\}$ around $\bar x$ implies that $v_1=\cdots=v_{r-1}=z=0$. Therefore, $\bs v=\bs w=0$ and we get that $\{\bs B,\bs K\}$ is strongly regular at $\bs {\bar x}$.
\end{proof}
\end{theorem}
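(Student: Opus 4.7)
The strategy is to handle each of the four items separately, in each case converting between regularity of $\bs B$ or $\bs K$ and that of the factor sets via \Cref{p:npsf} and \Cref{lem:ncps}. Item \ref{th:regI} is a standard Cartesian product fact for super-regular sets and I would simply cite it.

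For \ref{th:regII}, the key idea is to write $\bs K = \bs S \cap \bs D_{r-1}$ where $\bs S := C_r \times \cdots \times C_r$. By item \ref{th:regI}, $\bs S$ is super-regular at $\bs{\bar x}$. Given $\varepsilon > 0$, let $\delta$ be supplied by that super-regularity. For $\bs y,\bs z \in \bs K \cap \Ball(\bs{\bar x};\delta)$ and $\bs u \in N_{\bs K}(\bs z)$, I would invoke \Cref{lem:ncps}\ref{lem:ncps_III} (together with $\bs K=\bs S\cap \bs D_{r-1}$ and the fact that $\bs j_{r-1}(N_{C_r}(z))=N_{\bs S}(\bs z)\cap \bs D_{r-1}$) to decompose $\bs u = \bs v + \bs w$ with $\bs v \in N_{\bs S}(\bs z)\cap\bs D_{r-1}$ and $\bs w \in \bs D_{r-1}^\perp$. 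Super-regularity of $\bs S$ controls $\langle \bs v, \bs y - \bs z\rangle$, the term $\langle \bs w, \bs y - \bs z\rangle$ vanishes because $\bs y - \bs z \in \bs D_{r-1}$, and $\|\bs v\| \leq \|\bs u\|$ follows from the Pythagorean identity $\|\bs u\|^2=\|\bs v\|^2+\|\bs w\|^2$.

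For \ref{th:regIII}, the main computation is to relate the various distances at points $\bs q = \bs j_{r-1}(q) \in \bs D_{r-1}$:
\[
d_{\bs B \cap \bs K}^2(\bs q) = (r-1)\,d_{\cap_i C_i}^2(q), \qquad d_{\bs B}^2(\bs q) = \sum_{i=1}^{r-1} d_{C_i}^2(q), \qquad d_{\bs K}^2(\bs q) = (r-1)\,d_{C_r}^2(q),
\]
which immediately yield $\max_i d_{C_i}(q) \leq \max\{d_{\bs B}(\bs q), d_{\bs K}(\bs q)\}$. For a general $\bs z$ near $\bs{\bar x}$, I would set $\bs q := P_{\bs D_{r-1}}(\bs z)$, exploit $\|\bs q-\bs z\|=d_{\bs D_{r-1}}(\bs z) \leq d_{\bs K}(\bs z)$, and combine with triangle inequalities and the linear regularity of $\{C_1,\ldots,C_r\}$ applied at $q$. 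The careful point is to shrink the radius of $\bs z$ (by a factor involving $\sqrt{r-1}$) so that $q$ falls in the ball where the original linear regularity is assumed.

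For \ref{th:regIV}, assume $\bs v + \bs w = 0$ with $\bs v \in N_{\bs B}(\bs{\bar x})$ and $\bs w \in N_{\bs K}(\bs{\bar x})$. Using \Cref{lem:ncps}, write $\bs v = (v_1, \ldots, v_{r-1})$ with $v_i \in N_{C_i}(\bar x)$, and $\bs w = \bs j_{r-1}(z) + \bs u$ with $z \in N_{C_r}(\bar x)$ and $\bs u=(u_1,\ldots,u_{r-1}) \in \bs D_{r-1}^\perp$. Comparing components yields $v_i + z + u_i = 0$ for each $i$; summing over $i$ annihilates $\sum u_i=0$ and produces $\sum_{i=1}^{r-1} v_i + (r-1)z = 0$. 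Strong regularity of $\{C_1,\ldots,C_r\}$ then forces $v_1=\cdots=v_{r-1}=0$ and $(r-1)z=0$, hence $z=0$ and $\bs u=0$, giving $\bs v=\bs w=0$.

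I expect the main obstacle to be the bookkeeping in \ref{th:regIII}: the constant $\kappa$ of the original linear regularity must be propagated through a sequence of triangle inequalities while tracking the $\sqrt{r-1}$ scaling factors and ensuring the projected point stays in the admissible ball. The other items reduce fairly cleanly once the normal cone description of $\bs K$ in \Cref{lem:ncps}\ref{lem:ncps_III} is in hand.
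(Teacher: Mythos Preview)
Your proposal is correct and follows essentially the same approach as the paper's own proof: item \ref{th:regI} is cited, item \ref{th:regII} uses the decomposition $\bs K=\bs S\cap\bs D_{r-1}$ together with the normal cone splitting from \Cref{lem:ncps}\ref{lem:ncps_III} and the Pythagorean bound $\|\bs v\|\leq\|\bs u\|$, item \ref{th:regIII} proceeds via the distance identities on the diagonal, projection onto $\bs D_{r-1}$, and a chain of triangle inequalities with the $\sqrt{r-1}$ scaling, and item \ref{th:regIV} sums the componentwise equations to invoke strong regularity of $\{C_1,\ldots,C_r\}$. The bookkeeping you flag in \ref{th:regIII} is indeed the only delicate part, and the paper handles it exactly as you outline.
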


\section{Application to projection algorithms}\label{sec:applic}

We finally apply our main result (\Cref{th:reg}) to derive (local) linear convergence of projection algorithms constructed by means of the product space reformulation in \Cref{p:npsf}. In particular, we consider the \emph{generalized Douglas--Rachford (gDR) algorithm} analyzed in \cite{DP18}, as it includes the \emph{method of alternating projections (MAP)} and the \emph{Douglas--Rachford (DR) algorithm} as particular cases.

\begin{theorem}[Linear convergence of parallel gDR algorithm with reduced lifting]\label{th:gDR}
Let $\allowbreak C_1, C_2, \ldots, C_r\subseteq\Hi$ be nonempty super-regular sets with $\cap_{i=1}^r C_i\neq\emptyset$. Let $\lambda,\mu \in{]}0,2]$ and let $\alpha\in{]0,1[}$. Given $x_{1,0},\ldots,x_{r-1,0}\in\Hi$, set
\begin{equation}\label{it:DR}
\begin{aligned}
&\text{for } k\in\N:\\
&\left\lfloor\begin{array}{l}
p_k=P_{C_r}\left(\frac{1}{r-1}\sum_{i=1}^{r-1} x_{i,k}\right),\\
\text{for } i=1, 2,\ldots, r-1:\\
\left\lfloor\begin{array}{l}
u_{i,k}=(1-\lambda)x_{i,k}+\lambda p_k,\\
z_{i,k}=P_{C_i}\left(u_{i,k}\right),\\
x_{i,k+1}=(1-\alpha)x_{i,k}+\alpha\left((1-\mu)u_{i,k}+\mu z_{i,k}\right).
\end{array}\right.
\end{array}\right.
\end{aligned}
\end{equation}
Suppose that any of the following conditions holds:
\begin{enumerate}[label=(\roman*)]
\item $\{C_1,C_2,\ldots,C_r\}$ is strongly regular at a point $\bar x\in\cap_{i=1}^r C_i$,\label{th:gDR_I}
\item $\min\{\lambda,\mu\}<2$ and $\{C_1,C_2,\ldots,C_r\}$ is linearly regular around $\bar x\in\cap_{i=1}^r C_i$.\label{th:gDR_II}\\[-1ex]
\end{enumerate}
If the initial points $x_{1,0},\ldots,x_{r-1,0}$ are sufficiently close to $\bar x$, then, for each ${i\in\{1,\ldots,r-1\}}$, the sequence $\{x_{i,k}\}_{k\in\N}$ converges $R$-linearly to a point {$x^*\in\cap_{i=1}^r C_i$}. When, in addition, $C_1,C_2,\ldots,C_r$ are convex sets, the $R$-linear convergence of the sequences is global.
\end{theorem}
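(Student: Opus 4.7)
The plan is to recognize the iteration \eqref{it:DR} as the gDR recursion of \Cref{fact:gDR} acting in the product space $\Hi^{r-1}$ on the two sets $\bs B$ and $\bs K$ from \eqref{eq:npsf_sets}, and then to invoke \Cref{th:reg} to transfer the regularity hypotheses and \Cref{fact:gDR} to close the argument.

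First, bundle the iterates as $\bs x_k := (x_{1,k},\ldots,x_{r-1,k})\in\Hi^{r-1}$. By \Cref{p:npsf}\ref{p:npsf_K}, the quantity $p_k$ appearing in \eqref{it:DR} is precisely the common component of $P_{\bs K}(\bs x_k)$, and consequently $\bs u_k:=(u_{1,k},\ldots,u_{r-1,k})=(1-\lambda)\bs x_k+\lambda P_{\bs K}(\bs x_k)$. By \Cref{p:npsf}\ref{p:npsf_B}, $\bs z_k:=(z_{1,k},\ldots,z_{r-1,k})=P_{\bs B}(\bs u_k)$. Gathering the three substeps componentwise yields
\begin{equation*}
\bs x_{k+1}=(1-\alpha)\bs x_k+\alpha\bigl((1-\mu)\Id+\mu P_{\bs B}\bigr)\circ\bigl((1-\lambda)\Id+\lambda P_{\bs K}\bigr)(\bs x_k),
\end{equation*}
which is the gDR recursion \eqref{it:factgDR} applied to the pair of sets $(\bs K,\bs B)$ in $\Hi^{r-1}$ with the same relaxation parameters $\alpha,\lambda,\mu$.

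Next, set $\bs{\bar x}:=\bs j_{r-1}(\bar x)$; by \Cref{p:npsf}\ref{p:npsf_FP} we have $\bs{\bar x}\in \bs B\cap \bs K$. Super-regularity of each $C_i$ at $\bar x$ yields, via \Cref{th:reg}\ref{th:regI}--\ref{th:regII}, super-regularity of both $\bs B$ and $\bs K$ at $\bs{\bar x}$. Under hypothesis \ref{th:gDR_I}, \Cref{th:reg}\ref{th:regIV} gives strong regularity of $\{\bs B,\bs K\}$ at $\bs{\bar x}$; under \ref{th:gDR_II}, \Cref{th:reg}\ref{th:regIII} gives linear regularity around $\bs{\bar x}$ with $\min\{\lambda,\mu\}<2$ untouched. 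In either case the corresponding assumption of \Cref{fact:gDR} is satisfied for the pair $(\bs K,\bs B)$ at $\bs{\bar x}$. Moreover, if each $x_{i,0}$ is sufficiently close to $\bar x$ then $\|\bs x_0-\bs{\bar x}\|^2=\sum_{i=1}^{r-1}\|x_{i,0}-\bar x\|^2$ is small enough to lie inside the neighborhood provided by \Cref{fact:gDR}. Applying that fact yields $R$-linear convergence of $\{\bs x_k\}$ to some $\bs x^*\in \bs B\cap \bs K$.

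To finish, \Cref{p:npsf}\ref{p:npsf_FP} gives $\bs x^*=\bs j_{r-1}(x^*)$ for some $x^*\in\bigcap_{i=1}^r C_i$, and since $\|x_{i,k}-x^*\|\leq\|\bs x_k-\bs x^*\|$ for every $i$, each sequence $\{x_{i,k}\}_{k\in\N}$ inherits the $R$-linear rate of $\{\bs x_k\}$. For the convex case, \Cref{p:npsf}\ref{p:npsf_B}--\ref{p:npsf_K} guarantee that $\bs B$ and $\bs K$ are themselves convex, so the global $R$-linear convergence provided by the last sentence of \Cref{fact:gDR} transfers componentwise. The only nontrivial point in this plan is the bookkeeping in the first step, namely verifying that the particular order of reflections and relaxations in \eqref{it:DR} matches the gDR template with the same ranges for $\lambda,\mu,\alpha$; once this correspondence is established the rest is a direct application of the results already proved.
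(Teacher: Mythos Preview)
Your proposal is correct and follows essentially the same route as the paper: lift the iteration to $\Hi^{r-1}$, recognize it as the gDR scheme of \Cref{fact:gDR} applied to the pair $(\bs K,\bs B)$, transfer the regularity hypotheses via \Cref{th:reg}, and conclude. You supply more detail than the paper does (the componentwise rate inheritance, the closeness of the starting point, the convex case), but the structure and the key ingredients are identical.
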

\begin{proof}
Consider the product Hilbert space $\Hi^{r-1}$ and let $\bs B,\bs K\subseteq\Hi^{r-1}$ be the product sets defined in \eqref{eq:npsf_sets} which, in view of \Cref{th:reg}\ref{th:regI}--\ref{th:regII}, are super-regular. By \Cref{p:npsf}\ref{p:npsf_FP} we get that $\bs B\cap \bs K=\bs j_{r-1}(\cap_{i=1}^r C_i)\neq\emptyset$. Set $\bs x_{k}:=(x_{1,k},\ldots,x_{r-1,k})\in\Hi^{r-1}$ for all $k\in\N$. Hence, according to \Cref{p:npsf}\ref{p:npsf_B}--\ref{p:npsf_K}, we can rewrite \eqref{it:DR} as 
\begin{equation}\label{it:DR_nps}
\bs x_{k+1}=(1-\alpha)\bs x_{k}+\alpha\left((1-\mu)P_{\bs B}+\mu\Id\right)\circ\left((1-\lambda)P_{\bs K}+\lambda\Id\right)(\bs x_k),\quad\forall k\in\N.
\end{equation}
Note that \ref{th:gDR_I} (resp. \ref{th:gDR_II})  implies that $\{\bs B,\bs K\}$ is strongly regular (resp. linearly regular) at $\bs{\bar x}=\bs j_{r-1}(\bar x)\in \bs B\cap \bs K$ according to \Cref{th:reg}\ref{th:regIII} (resp. \Cref{th:reg}\ref{th:regIV}). Hence, the result follows from \cref{fact:gDR}.
\end{proof}

As previously mentioned, iteration \eqref{it:DR_nps} recovers some well-known classical projection methods. Hence, \Cref{th:gDR} provides local linear convergence for reduced parallel versions of these algorithms. We state next such result for the method of alternating projections, leading to what we will refer to as \emph{reduced averaged projections method}.

\begin{corollary}[Linear convergence of reduced averaged projections method]\label[corollary]{cor:RAP}
Let $\allowbreak C_1, C_2, \ldots, C_r\subseteq\Hi$ be nonempty super-regular sets with linearly regular intersection around $\bar x\in\cap_{i=1}^r C_i$. Given $x_{0}\in\Hi$, set
\begin{equation}\label{cor:itRAP}
x_{k+1}=P_{C_r}\left(\frac{1}{r-1}\sum_{i=1}^{r-1} P_{C_i}(x_k)\right), \quad \forall k\in\N.
\end{equation}
If $x_{0}$ is sufficiently close to $\bar x$, then the sequence $\{x_{k}\}_{k\in\N}$ converges $R$-linearly to a point {$x^*\in\cap_{i=1}^r C_i$}. When, in addition, $C_1,C_2,\ldots,C_r$ are convex sets, the $R$-linear convergence of the sequence is global.
\begin{proof}
Apply \Cref{th:gDR}\ref{th:gDR_I} with $\lambda=\mu=\alpha=1$.
\end{proof}
\end{corollary}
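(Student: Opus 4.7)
The plan is to deduce the corollary as a direct specialization of \Cref{th:gDR} with the parameter choice $\lambda=\mu=\alpha=1$. This choice collapses all of the relaxation/averaging steps in \eqref{it:DR} to the identity, leaving a pure averaged-projections recursion. Since $\min\{\lambda,\mu\}=1<2$, the linear regularity assumption of the corollary is precisely what is required to invoke part \ref{th:gDR_II} of \Cref{th:gDR}.

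First I would carry out the substitution in \eqref{it:DR} and collect the simplifications: $u_{i,k}=p_k$, $z_{i,k}=P_{C_i}(p_k)$, and $x_{i,k+1}=P_{C_i}(p_k)$. Plugging these back into the definition of $p_{k+1}$ yields
$$p_{k+1}=P_{C_r}\!\left(\frac{1}{r-1}\sum_{i=1}^{r-1}P_{C_i}(p_k)\right),$$
which is exactly the recursion \eqref{cor:itRAP}. Thus, initializing the auxiliary variables of \eqref{it:DR} by $x_{i,0}:=P_{C_i}(x_0)$ for $i=1,\ldots,r-1$, the scalar $p_0$ equals the first iterate $x_1$ of \eqref{cor:itRAP}, and inductively $p_k=x_{k+1}$ for every $k\in\N$.

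It then remains to transfer the convergence conclusion: \Cref{th:gDR}\ref{th:gDR_II} asserts that each auxiliary sequence $\{x_{i,k}\}$ converges $R$-linearly to a common limit $x^*\in\cap_{i=1}^rC_i$, and since $p_k$ is the projection onto $C_r$ of the average of the $x_{i,k}$'s, the sequence $\{p_k\}$—hence the corollary's $\{x_k\}$—inherits an $R$-linear rate (using that $P_{C_r}$ fixes $x^*\in C_r$ and that projections contract distances to fixed points of target sets near super-regular points). The global statement under convexity follows immediately from the corresponding global statement in \Cref{fact:gDR} via \Cref{th:gDR}.

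The only delicate point I anticipate is the bookkeeping between the single-sequence recursion of the corollary and the $r-1$ auxiliary sequences used in \Cref{th:gDR}: I need to verify that ``$x_0$ sufficiently close to $\bar x$'' translates into ``$x_{1,0},\ldots,x_{r-1,0}$ sufficiently close to $\bar x$''. This is routine because $\bar x\in C_i$, so $P_{C_i}(\bar x)=\bar x$, and $x_{i,0}=P_{C_i}(x_0)$ inherits closeness to $\bar x$ from $x_0$ (up to a constant factor coming from nonexpansivity of projectors near fixed points of super-regular sets). With that in hand, the corollary follows.
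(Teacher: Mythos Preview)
Your approach is essentially the paper's: specialize \Cref{th:gDR} with $\lambda=\mu=\alpha=1$, and you correctly invoke part~\ref{th:gDR_II} since the corollary assumes linear regularity and $\min\{\lambda,\mu\}=1<2$ (the paper's one-line proof cites \ref{th:gDR_I}, which appears to be a typo, as strong regularity is nowhere assumed in the corollary). Your bookkeeping linking $\{p_k\}$ to the corollary's $\{x_k\}$ is accurate and more explicit than the paper's.

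One small simplification: the two places where you appeal to ``nonexpansivity/contractivity of projections near fixed points of super-regular sets'' do not actually need any regularity. For any closed $C$ and any $x^*\in C$, a projection $p\in P_C(a)$ satisfies
\[
\|p-x^*\|\le \|p-a\|+\|a-x^*\|=d_C(a)+\|a-x^*\|\le 2\|a-x^*\|,
\]
so both the closeness of $x_{i,0}=P_{C_i}(x_0)$ to $\bar x$ and the $R$-linear convergence of $p_k=P_{C_r}\bigl(\tfrac{1}{r-1}\sum_i x_{i,k}\bigr)$ follow by the triangle inequality alone. With that adjustment your argument is complete.
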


\begin{remark}
An analogous result can be derived for the Douglas--Rachford algorithm by taking $\lambda=\mu=2$ in \eqref{it:DR}.  In particular, \Cref{th:gDR} under scenario \ref{th:gDR_I} applies to the parallel DR-algorithm with reduced dimension proposed in \cite[Theorem~5.1]{campoy21} in the context of feasibility problems.
\end{remark}

\subsection{Numerical experiment}

In this section, we present a numerical example to illustrate the linear convergence of the reduced averaged projections method discussed in \Cref{cor:RAP}. Our objective is to replicate the signal compression problem analyzed in \cite[Section~9]{LLM09}, which was utilized to show the linear convergence of the traditional averaged projections method.

Given a ``dictionary'' $W\in\Rr^{n\times m}$ and a threshold $\alpha>0$, the recovery of the signal is addressed by solving the feasibility problem
\begin{equation*}
\text{find } \quad U^*\in L\cap M\cap C \subseteq\Rr^{d\times m},
\end{equation*}
where
\begin{subequations}\label{e:num_sets}
\begin{align}
L &:= \left\{ U\in\Rr^{d\times m} : U=PW, \text{ with } P\in\Rr^{d\times n} \right\},\label{setL}\\
M &:= \left\{ U\in\Rr^{d\times m} : UU^T = I \right\},\label{setM}\\
C &:= \left\{ U\in\Rr^{d\times m} : \|U\|_{\infty}\leq \alpha \right\}.\label{setC}
\end{align}
\end{subequations}
It is not difficult to check that the projectors onto these sets can be computed~as
\begin{align*}
P_L(U) &= UW^T(WW^T)^{-1}W,\\
P_M(U) &= \left\{PQ^T : P\Sigma Q^T \text{ is a singular value decomposition of } U \right\},\\
P_C(U) &= \min(\max(U,-\alpha),\alpha);
\end{align*}
where the maximum and minimum in $P_C$ are understood componentwise. Furthermore, as mentioned in \cite{LLM09}, the three sets in \eqref{e:num_sets} are super-regular, whereas the linear regularity of their intersection is expected from randomness when generating the problem, provided that $\alpha$ is not too small. 

In our experiment, we set $n=128$, $m=512$, $d=8$ and $\alpha=0.1$. The entries of the matrix $W\in\Rr^{128\times 512}$, as well as those of the initial iterate $U_0\in\Rr^{8\times 512}$, were randomly generated from a standard normal distribution. From that point, we run the averaged projections algorithm, which iterates as
\begin{equation}\label{eq:avproj}
U_{k+1}=\frac{1}{3}\left(P_L(U_k)+P_M(U_k)+P_C(U_k)\right), \quad\forall k\in\N;
\end{equation} 
and the reduced averaged projections method in \Cref{cor:RAP}. Note that \eqref{eq:avproj} is completely symmetric with respect to the order of the sets. However, this is not the case for the reduced averaged projections in \eqref{cor:itRAP}, where the set $C_r$ acts as a ``central coordinator''. Thus, in our experiment we consider all three possibilities for this method depending on which of the sets $L$, $M$, or $C$ plays the role of coordinator (indicated between brackets). We stopped each algorithm when $\|U_{k+1}-U_k\|<10^{-12}$. In \Cref{fig:exp} we plot the norm $\|U_k-U^*\|$  with respect to the iteration, where $U^*$ denotes the limit of the sequence.

\begin{figure}[htp!]\centering
\includegraphics[width=0.71\textwidth]{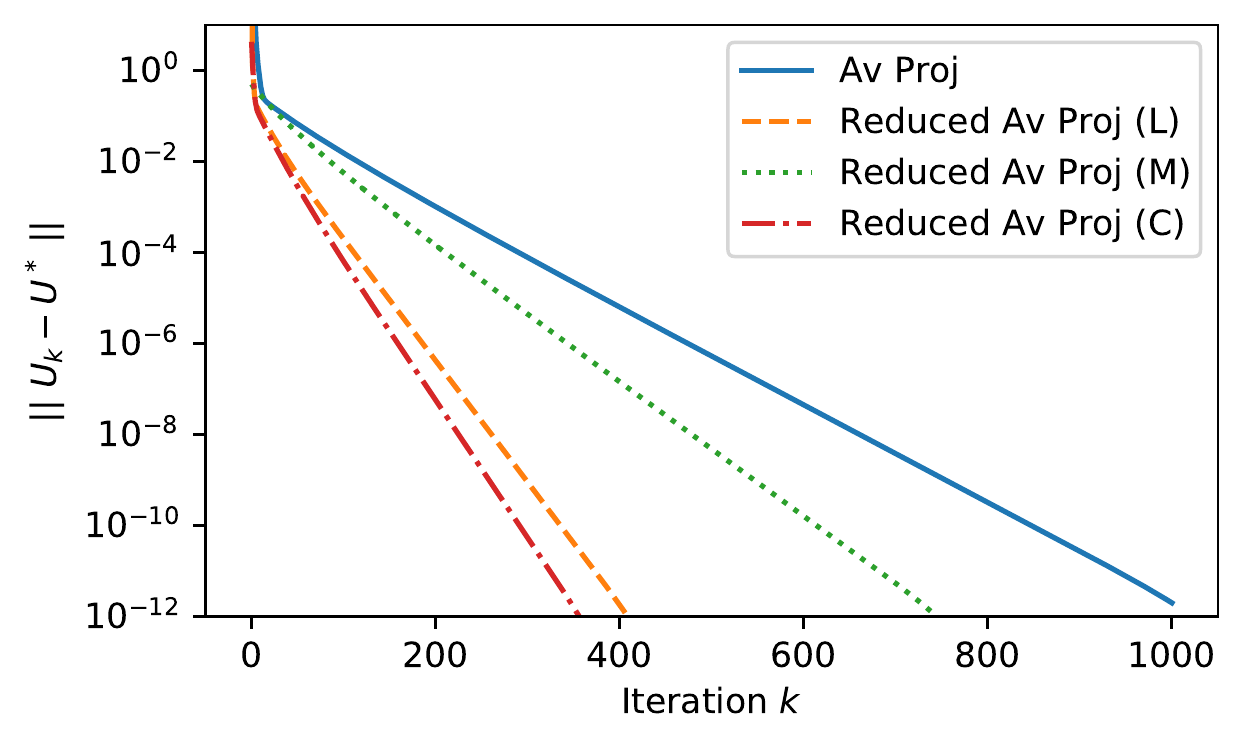}
\caption{Comparison of the convergence rate of \emph{averaged projections} and \emph{reduced averaged projections} methods for solving a signal compression problem. For each method we plot the distance to solution, in logarithmic scale, with respect to the iteration.}\label{fig:exp}
\end{figure}

We can clearly observe a linearly convergent behavior of all tested methods, showing the reduced versions of the method a better convergence rate than its classical version. Furthermore, the choice of the coordinator set $C_r$ in \eqref{cor:itRAP} seems to have a strong impact in the convergence rate of the method. In our experiment, the fastest convergence was achieved by selecting set $C$ in \eqref{setC}, followed closely by selecting $L$ in \eqref{setL}. Overall, the results suggest that choosing the appropriate coordinator set can significantly improve the convergence rate of the method.

\section{Conclusions}\label{sec:Conc}

In this manuscript we explored how some regularity properties of sets and of collections of sets are preserved under a reformulation in a product space with reduced dimension. This allows for the establishment of local linear convergence of parallel projection methods constructed through this reformulation. Specifically, the results were applied to the generalized Douglas--Rachford algorithm, which include some well-known projection algorithms as particular~cases. 

A numerical demonstration on a signal compression problem, replicating that of \cite[Section~9]{LLM09}, was included. This study tested the method of averaged projections and some reduced versions of this method constructed trough the analyzed reformulation. As expected, all methods showed to be linearly convergent. In addition, a better convergence rate was obtained for the reduced methods in this specific experiment. It remains open for future research to analyze the convergence rate of these methods, particularly with respect to the order of the sets and its effect on the rate. 

{\small
\paragraph{\small Acknowledgements}
The author was partially supported by the Ministry of Science and Innovation of Spain and the European Regional Development Fund (ERDF) of the European Commission, Grant PID2022-136399NB-C21, and by the Generalitat Valenciana (AICO/2021/165).

}

\end{document}